\newtheorem{theorem}{Theorem}
\newtheorem{lemma}{Lemma}
\newtheorem{remark}{Remark}
\newenvironment{proof}[1][Proof]{\noindent\textbf{#1.} }{\ \rule{0.5em}{0.5em}}
\begin{document}

\title{The foci and rotation angle of an ellipse, $E_{0}$, as a function of
the coefficients of an equation of $E_{0}$ }
\author{Alan Horwitz}
\date{5/27/17}
\maketitle

\begin{abstract}
First, we give a formula for the foci of an ellipse, $E_{0}$, as a function
of the coefficients of an equation of $E_{0}$(see Theorem \ref{T2}). To
prove Theorem \ref{T2}, we use two interesting formulas proven in \cite{B}
and in \cite{S}. Our second result(see Theorem \ref{T3}), is a more precise
formula for the rotation angle, $\theta $, of $E_{0}$, as a function of the
coefficients of an equation of $E_{0}$.
\end{abstract}

\section{Introduction}

The purpose of this note is two fold. First, we give a formula for the foci
of an ellipse, $E_{0}$, as a function of the coefficients of an equation of $%
E_{0}$(see Theorem \ref{T2}); To prove Theorem \ref{T2}, we use two
interesting formulas proven in \cite{B} and in \cite{S}. The main result in 
\cite{B} expresses the foci of an ellipse, $E_{0}$, as a function of the
coefficients of an equation of $E_{0}$, but also requires knowing the length
of the major axis of $E_{0}$; We expand on that formula a little and give
the proof here(see Theorem \ref{T1}). A formula in \cite{S} yields the
length of the major axis of $E_{0}\ $as a function of the coefficients(see
Lemma \ref{L1}). Theorem \ref{T1} and Lemma \ref{L1} then yield Theorem \ref%
{T2}.

There are various ways to define the rotation angle, $\theta $, of a
non--circular ellipse, $E_{0}$; Below we define $\theta $ to be the
counterclockwise angle of rotation to the major axis of $E_{0}$ from the
line thru the center of $E_{0}$ and parallel to the $x$\ axis, with $0\leq
\theta <\pi $; No matter how one defines $\theta $, it is always true that $%
\cot (2\theta )=\dfrac{A-C}{B}$; But what about a formula for $\theta $
itself ? Our second result(see Theorem \ref{T3}), is a more precise formula
for the rotation angle, $\theta $, of $E_{0}$, as a function of the
coefficients of an equation of $E_{0}$. The latter formula was submitted as
a correction for the previous formula for the rotation angle given in \cite%
{M}. The formula given here now appears in \cite{M} in a slightly different
form. The proof of Theorem \ref{T3} then follows easily from the proof of
Theorem \ref{T1}.

While the formulas given in this note are undoubedtly known, and there are
other ways of proving them, we found it interesting to use and highlight the
results in \cite{B} and in \cite{S}.

\section{Foci as a Function of the Coefficients}

Throughout, for a given ellipse, $E_{0}$, which is not a circle, we let $%
\theta $\ denote the counterclockwise angle of rotation to the major axis of 
$E_{0}$ from the line thru the center of $E_{0}$ and parallel to the $x$\
axis, with $0\leq \theta <\pi $; We let $(x_{0},y_{0})=$ center of $E_{0},a=$
length of semi--major and $b=$ length of semi--minor axes of $E_{0}$,
respectively. Finally, we let $F_{2}=\left( x_{c},y_{c}\right) $ denote the
rightmost focus of $E_{0}$(if $\theta =\dfrac{\pi }{2}$, we let $F_{2}$
denote the uppermost focus). Knowing $F_{2}$ easily yields the other focus, $%
F_{1}=\left( 2x_{0}-x_{c},2y_{0}-y_{c}\right) $;

We now state an extension, and give a detailed proof, of the result in \cite%
{B}. (i) gives the equation of an ellipse, $E_{0}$, given the foci of $E_{0}$%
, while (ii) gives the foci of $E_{0}$ given the equation of $E_{0}$. In
each case, one must also know the length of the semi--major axis of $E_{0}$.

\begin{theorem}
\label{T1} Let $E_{0}$ be an ellipse which is not a circle, let $%
F_{2}=\left( x_{c},y_{c}\right) $ be the rightmost focus of $E_{0}$, and let 
$(x_{0},y_{0})$ be the center of $E_{0}$.

(i) Then the equation of $E_{0}$ can be written in the form $A\left(
x-x_{0}\right) ^{2}+B\left( x-x_{0}\right) \left( y-y_{0}\right) +C\left(
y-y_{0}\right) ^{2}-a^{2}b^{2}=0$, where $%
A=a^{2}-(x_{c}-x_{0})^{2},B=-2(x_{c}-x_{0})(y_{c}-y_{0})$, and $%
C=a^{2}-(y_{c}-y_{0})^{2}$.

(ii) If the equation of $E_{0}$ is written in the form $A\left(
x-x_{0}\right) ^{2}+B\left( x-x_{0}\right) \left( y-y_{0}\right) +C\left(
y-y_{0}\right) ^{2}-a^{2}b^{2}=0$, where $A,C>0$, then 
\begin{eqnarray}
x_{c} &=&x_{0}+\sqrt{a^{2}-A}{\large ,}  \label{foci1} \\
y_{c} &=&y_{0}-(\limfunc{sgn}B)\sqrt{a^{2}-C}{\large )}\text{ if }B\neq 0%
\text{.}  \nonumber
\end{eqnarray}%
In addition, if $0\leq \theta <\dfrac{\pi }{2}$, then $B<0$, while if $%
\dfrac{\pi }{2}<\theta <\pi $, then $B>0$. Finally, 
\begin{eqnarray}
x_{c} &=&x_{0}+\dfrac{1}{2}{\large (}1-\limfunc{sgn}(A-C){\large )}\sqrt{%
a^{2}-A}{\large ,}  \label{foci2} \\
y_{c} &=&y_{0}+\dfrac{1}{2}{\large (}1+\limfunc{sgn}(A-C){\large )}\sqrt{%
a^{2}-C}{\large )}\text{ if }B=0\text{.}  \nonumber
\end{eqnarray}%
\newline
\end{theorem}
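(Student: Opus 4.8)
The plan is to prove (i) directly from the two–focus definition of the ellipse and then read off (ii) by inverting the resulting coefficient relations. First I would translate to the center, writing $u = x - x_{0}$, $v = y - y_{0}$, and setting $p = x_{c} - x_{0}$, $q = y_{c} - y_{0}$, so that the two foci become $(p,q)$ and $(-p,-q)$. The defining equation is $r_{1} + r_{2} = 2a$, where $r_{2} = \sqrt{(u-p)^{2} + (v-q)^{2}}$ and $r_{1} = \sqrt{(u+p)^{2} + (v+q)^{2}}$. The standard device is to rationalize: since $r_{1}^{2} - r_{2}^{2} = 4(up + vq)$ and $r_{1} - r_{2} = (r_{1}^{2} - r_{2}^{2})/(r_{1}+r_{2})$, one radical can be isolated as $r_{2} = a - (up+vq)/a$, and squaring once eliminates all square roots.

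Carrying out that single squaring and cancelling the linear terms, I expect to arrive at $(a^{2} - p^{2})u^{2} - 2pq\,uv + (a^{2} - q^{2})v^{2} = a^{4} - a^{2}(p^{2} + q^{2})$. Recognizing $p^{2} + q^{2} = c^{2} = a^{2} - b^{2}$ (the square of the focal distance) turns the right-hand side into $a^{2}b^{2}$, and the coefficients on the left are exactly $A = a^{2} - p^{2}$, $B = -2pq$, $C = a^{2} - q^{2}$ as claimed. This establishes (i); (ii) is its algebraic inverse. From $A = a^{2} - (x_{c} - x_{0})^{2}$ and $C = a^{2} - (y_{c} - y_{0})^{2}$ (both positive by hypothesis) I get $|x_{c} - x_{0}| = \sqrt{a^{2} - A}$ and $|y_{c} - y_{0}| = \sqrt{a^{2} - C}$; the only real work is fixing the signs.

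For the signs I would use the geometric meaning of $F_{2}$. Because $F_{2}$ is chosen to be the rightmost focus, $x_{c} - x_{0} \geq 0$, which forces the $+$ sign in (\ref{foci1}). For $y_{c} - y_{0}$ when $B \neq 0$, the relation $B = -2(x_{c} - x_{0})(y_{c} - y_{0})$ with $x_{c} - x_{0} > 0$ gives $\limfunc{sgn}(y_{c} - y_{0}) = -\limfunc{sgn}B$, which is exactly the stated formula. To prove the sign assertions on $B$, I would parametrize the focus displacement as $\pm c(\cos\theta, \sin\theta)$, selecting the sign that makes the $x$-component nonnegative; in either case $B = -2pq = -c^{2}\sin 2\theta$, and examining $\sin 2\theta$ on $(0, \frac{\pi}{2})$ versus $(\frac{\pi}{2}, \pi)$ yields $B < 0$ and $B > 0$ respectively.

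Finally, for the degenerate case $B = 0$ giving (\ref{foci2}), I would treat $\theta = 0$ and $\theta = \frac{\pi}{2}$ separately. When $\theta = 0$ the major axis is horizontal, so $A = b^{2} < a^{2} = C$ and the focal offset lands on $x_{c}$; when $\theta = \frac{\pi}{2}$ the axis is vertical, so $A = a^{2} > b^{2} = C$ and the offset lands on $y_{c}$. Thus $\limfunc{sgn}(A - C)$ distinguishes the two cases, and the switching factors $\frac{1}{2}(1 - \limfunc{sgn}(A-C))$ and $\frac{1}{2}(1 + \limfunc{sgn}(A-C))$ route the offset $\sqrt{a^{2} - A}$ or $\sqrt{a^{2} - C}$ to the correct coordinate. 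The main obstacle is not the algebra, which is the routine rationalization of the focal-sum equation, but the careful sign and orientation bookkeeping in part (ii): correctly arguing that ``rightmost focus'' pins down $x_{c} - x_{0} \geq 0$ and verifying each $\limfunc{sgn}$ formula across the boundary cases $\theta = 0$ and $\theta = \frac{\pi}{2}$.
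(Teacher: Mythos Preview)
Your proposal is correct and follows essentially the same route as the paper: translate to the center, rationalize the two-focus equation to obtain $(a^{2}-p^{2})u^{2}-2pq\,uv+(a^{2}-q^{2})v^{2}=a^{2}b^{2}$, match coefficients for (i), and then invert with a sign analysis based on the rightmost-focus convention and the parametrization $(p,q)=\pm c(\cos\theta,\sin\theta)$ for (ii). The only cosmetic difference is that you isolate one radical via $r_{1}-r_{2}=(r_{1}^{2}-r_{2}^{2})/(r_{1}+r_{2})$ and square once, whereas the paper performs the traditional two successive squarings; both lead to the same identity.
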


\begin{proof}
It is clear that we may assume that $x_{0}=y_{0}=0$, so that the equation of 
$E_{0}$ has the form 
\begin{equation}
Ax^{2}+Bxy+Cy^{2}+G=0\text{.}  \label{1}
\end{equation}%
The implicit assumption in \cite{B} is that $0\leq \theta <\dfrac{\pi }{2}$;
We outline the proof in the case when $\dfrac{\pi }{2}\leq \theta <\pi $ as
well. If $0\leq \theta <\dfrac{\pi }{2}$, then $F_{2}$ lies in quadrant 1,
while if $\dfrac{\pi }{2}\leq \theta <\pi $, then $F_{2}$ lies in quadrant
4; Letting $c=\sqrt{a^{2}-b^{2}}$, we then have 
\begin{equation}
\left\{ 
\begin{array}{ll}
x_{c}=c\cos \theta ,y_{c}=c\sin \theta & \text{if }0\leq \theta \leq \dfrac{%
\pi }{2} \\ 
x_{c}=-c\cos \theta ,y_{c}=-c\sin \theta & \text{if }\dfrac{\pi }{2}<\theta
<\pi%
\end{array}%
\right. \text{.}  \label{4}
\end{equation}

Recall that if $\theta =\dfrac{\pi }{2}$, then $F_{2}$ is the uppermost
focus. Proceeding as in \cite{B}(we include the details here for
completeness), we have:

$\sqrt{(x-x_{c})^{2}+(y-y_{c})^{2}}+\sqrt{(x+x_{c})^{2}+(y+y_{c})^{2}}=2a$,
which implies that

$(x+x_{c})^{2}+(y+y_{c})^{2}=4a^{2}-4a\sqrt{(x-x_{c})^{2}+(y-y_{c})^{2}}%
+(x-x_{c})^{2}+(y-y_{c})^{2}$, and so

$a^{2}{\large (}(x-x_{c})^{2}+(y-y_{c})^{2}{\large )}%
=a^{4}-2a^{2}(xx_{c}+yy_{c})+(xx_{c}+yy_{c})^{2}$; Some simplification yields

$(a^{2}-x_{c}^{2})x^{2}-\allowbreak
2x_{c}y_{c}xy+(a^{2}-y_{c}^{2})y^{2}+a^{2}(c^{2}-a^{2})=0$, and using $%
b^{2}=a^{2}-c^{2}$ gives 
\begin{equation}
(a^{2}-x_{c}^{2})x^{2}-\allowbreak
2x_{c}y_{c}xy+(a^{2}-y_{c}^{2})y^{2}-a^{2}b^{2}=0\text{.}  \label{2}
\end{equation}

Matching (\ref{1}) with (\ref{2}) yields 
\begin{equation}
A=a^{2}-x_{c}^{2},B=-\allowbreak 2x_{c}y_{c},C=a^{2}-y_{c}^{2}\text{,}
\label{3}
\end{equation}%
and $G=-a^{2}b^{2}$, which proves (i). To prove (ii): Note that $\left\vert
x_{c}\right\vert =\sqrt{a^{2}-A}$ and $\left\vert y_{c}\right\vert =\sqrt{%
a^{2}-C}$.

\textbf{Case 1: }$B\neq 0$

Then $x_{c}\neq 0\neq y_{c}$ by (\ref{3}), which implies that $0\neq \theta
\neq \dfrac{\pi }{2}$ by (\ref{4}); If $0<\theta <\dfrac{\pi }{2}$, then $%
x_{c}>0$ and $y_{c}>0$ by (\ref{4}), which implies that $B<0,x_{c}=\sqrt{%
a^{2}-A}$, and $y_{c}=\sqrt{a^{2}-C}$; If $\dfrac{\pi }{2}<\theta <\pi $,
then $x_{c}>0$ and $y_{c}<0$ by (\ref{4}), which implies that $B>0,x_{c}=%
\sqrt{a^{2}-A}$, and $y_{c}=-\sqrt{a^{2}-C}$; That proves (\ref{foci1}).

\textbf{Case 2: }$B=0$

Then $x_{c}=0$ or $y_{c}=0$ by (\ref{3}); If $A<C$, then by (\ref{3}) again, 
$a^{2}-x_{c}^{2}<a^{2}-y_{c}^{2}$, which implies that $y_{c}^{2}<x_{c}^{2}$,
and so $y_{c}=0$; Thus $\theta =0$, which implies that $x_{c}=c>0$ by (\ref%
{4}) and so $x_{c}=\sqrt{a^{2}-A}$; If $A>C$, then $%
a^{2}-x_{c}^{2}>a^{2}-y_{c}^{2}$, which implies that $y_{c}^{2}>x_{c}^{2}$,
and so $x_{c}=0$; Thus $\theta =\dfrac{\pi }{2}$, which implies that$%
y_{c}=c>0$ by (\ref{4}) and so $y_{c}=\sqrt{a^{2}-C}$; That proves (\ref%
{foci2}).

For the following two lemmas, we let $\Delta =4AC-B^{2}$ and $\delta
=CD^{2}+AE^{2}-BDE-F\Delta $. The following result can be found in \cite{S}.
\end{proof}

\begin{lemma}
\label{L1}Suppose that $E_{0}$ is an ellipse with equation $%
Ax^{2}+Bxy+Cy^{2}+Dx+Ey+F=0$; Let $a$ and $b$ denote the lengths of the
semi--major and semi--minor axes, respectively, of $E_{0}$, and let $\mu =%
\dfrac{4\delta }{\Delta ^{2}}$. Then 
\begin{eqnarray}
a^{2} &=&\mu \dfrac{A+C+\sqrt{(A-C)^{2}+B^{2}}}{2}  \label{absq} \\
b^{2} &=&\mu \dfrac{A+C-\sqrt{(A-C)^{2}+B^{2}}}{2}\text{.}  \nonumber
\end{eqnarray}
\end{lemma}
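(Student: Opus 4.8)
The plan is to reduce the general equation $Ax^2+Bxy+Cy^2+Dx+Ey+F=0$ to the standard form of an ellipse by the two classical coordinate changes: first a translation sending the center to the origin, then a rotation eliminating the cross term. Once the equation has been brought to the form $\frac{u^2}{a^2}+\frac{v^2}{b^2}=1$, the squared semi-axes can be read off directly, and the remaining work is to re-express them in terms of the original coefficients $A,B,C,D,E,F$.

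First I would translate to the center $(x_0,y_0)$, the simultaneous solution of $2Ax_0+By_0+D=0$ and $Bx_0+2Cy_0+E=0$. This removes the linear terms and produces a centered equation $Ax'^2+Bx'y'+Cy'^2+F'=0$. The key fact I would use is that the new constant equals the ratio $F'=\det M/\det M_0$, where $M$ is the symmetric $3\times 3$ matrix of the conic (with the usual $\frac{1}{2}$ convention on $B,D,E$) and $M_0$ is its upper-left $2\times 2$ block. A direct determinant computation gives $\det M=-\delta/4$ and $\det M_0=\Delta/4$, hence $F'=-\delta/\Delta$; this is precisely where the quantities $\delta$ and $\Delta$ from the statement enter.

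Next I would diagonalize the quadratic part, i.e. rotate by the angle $\theta$ with $\cot(2\theta)=(A-C)/B$. The rotated coefficients of $u^2$ and $v^2$ are the eigenvalues $\lambda_\pm$ of $M_0$, and since $\lambda_++\lambda_-=A+C$ and $\lambda_+\lambda_-=\Delta/4$, solving the characteristic equation gives $\lambda_\pm=\frac{1}{2}\big((A+C)\pm\sqrt{(A-C)^2+B^2}\big)$, the discriminant collapsing to exactly $(A-C)^2+B^2$. The centered, rotated equation is then $\lambda_+u^2+\lambda_-v^2=\delta/\Delta$, so the squared semi-axes are $(\delta/\Delta)/\lambda_\pm$, with the semi-major axis $a$ attached to the smaller eigenvalue $\lambda_-$.

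The final step is purely algebraic: rationalize $a^2=(\delta/\Delta)/\lambda_-=2\delta/\big(\Delta[(A+C)-\sqrt{(A-C)^2+B^2}]\big)$ by multiplying numerator and denominator by the conjugate. Since $(A+C)^2-\big((A-C)^2+B^2\big)=4AC-B^2=\Delta$, the denominator becomes $\Delta^2$, yielding $a^2=\frac{2\delta}{\Delta^2}\big((A+C)+\sqrt{(A-C)^2+B^2}\big)=\mu\,\frac{A+C+\sqrt{(A-C)^2+B^2}}{2}$, and symmetrically for $b^2$. I expect the main obstacle to be bookkeeping rather than anything conceptual: keeping the factor-of-two conventions in $M$ straight so that $\det M=-\delta/4$ comes out correctly, pairing the semi-major axis with the smaller eigenvalue, and verifying that $\delta/\Delta>0$ in the ellipse case so that both axes are real and positive.
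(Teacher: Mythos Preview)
Your argument is correct and complete: the translation to the center yields the constant $-\delta/\Delta$, the rotation diagonalizes the quadratic form with eigenvalues $\tfrac{1}{2}\big((A+C)\pm\sqrt{(A-C)^2+B^2}\big)$, and the rationalization by the conjugate produces the stated formulas with $\mu=4\delta/\Delta^2$. Note, however, that the paper does not supply its own proof of this lemma; it simply quotes the result from \cite{S}, so there is no in-paper argument to compare against. What you have written is the standard derivation one would expect to find in \cite{S} or any text treating the reduction of a general conic to canonical form.
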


We state the following useful general lemma about equations of ellipses. The
second condition ensures that the conic is non--degenerate, while the first
condition ensures that the conic is an ellipse.

\begin{lemma}
\label{L2}The equation $Ax^{2}+Bxy+Cy^{2}+Dx+Ey+F=0$, with $A,C>0$, is the
equation of an ellipse if and only if $\Delta >0$ and $\delta >0$.
\end{lemma}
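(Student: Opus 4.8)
The plan is to put the general equation into standard form by the classical two-step procedure of translating to the center and then rotating away the cross term, after which the classification falls out of the signs of $\Delta$ and $\delta$ alone. The bookkeeping is cleanest if I first record two determinantal identities. Let $Q$ denote the $2\times2$ matrix of the quadratic part and let
\[
M=\left(\begin{array}{ccc} A & \frac{B}{2} & \frac{D}{2} \\ \frac{B}{2} & C & \frac{E}{2} \\ \frac{D}{2} & \frac{E}{2} & F \end{array}\right)
\]
be the full symmetric matrix of the conic. A direct expansion gives $\det Q=AC-\frac{B^{2}}{4}=\frac{\Delta}{4}$ and $\det M=-\frac{\delta}{4}$, so that the two hypotheses $\Delta>0$ and $\delta>0$ are exactly the statements $\det Q>0$ and $\det M<0$. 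Recasting everything in terms of these classical invariants is what keeps the rest of the argument short.

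First I would locate the center. As long as $\Delta\neq0$ (so that $Q$ is invertible), the translation $x=u+x_{0}$, $y=v+y_{0}$, where $(x_{0},y_{0})$ solves $2Ax_{0}+By_{0}+D=0$ and $Bx_{0}+2Cy_{0}+E=0$, removes the linear terms and leaves $Au^{2}+Buv+Cv^{2}+F'=0$. A short computation --- substituting the center back into the left-hand side, or factoring $M$ by the Schur complement of its leading block $Q$ --- shows that the new constant is $F'=\det M/\det Q=-\delta/\Delta$. I would then rotate to diagonalize $Q$; its eigenvalues are $\lambda_{\pm}=\frac{1}{2}\left(A+C\pm\sqrt{(A-C)^{2}+B^{2}}\right)$, with $\lambda_{+}+\lambda_{-}=A+C$ and $\lambda_{+}\lambda_{-}=\det Q=\Delta/4$, so in the rotated coordinates the equation becomes $\lambda_{+}s^{2}+\lambda_{-}t^{2}=\delta/\Delta$.

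With this normal form the classification is immediate. Since $A,C>0$ we have $A+C>0$, so the sign of $\Delta$ governs the eigenvalues: $\Delta>0$ forces $\lambda_{\pm}>0$; $\Delta<0$ gives eigenvalues of opposite sign (hyperbolic type); and $\Delta=0$ makes one eigenvalue vanish (parabolic or degenerate type). Neither of the latter two is a genuine ellipse, which already disposes of the borderline cases in the forward direction. Assuming $\Delta>0$, the locus $\lambda_{+}s^{2}+\lambda_{-}t^{2}=\delta/\Delta$ is a real, non-degenerate ellipse exactly when the right-hand side is positive, that is when $\delta/\Delta>0$; because $\Delta>0$ this is precisely $\delta>0$. (When $\delta=0$ the locus collapses to the single center point, and when $\delta<0$ it is empty.) This establishes both implications at once.

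The step that demands the most care is the evaluation of the constant term after centering, namely the identity $F'=\det M/\det Q=-\delta/\Delta$; I expect this to be the main computational obstacle, and the tidiest route is to treat $\det M$ as a relative invariant and obtain it from the Schur-complement factorization of $M$ with respect to $Q$, rather than grinding through the substitution by hand. The only other point needing attention is interpreting the word \emph{ellipse} in the strict, non-degenerate sense in the forward direction, so that the cases $\Delta=0$ and $\delta\le0$ are explicitly excluded as parabolas, hyperbolas, point-conics, or the empty set.
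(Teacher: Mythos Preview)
Your argument is correct: the matrix identities $\det Q=\Delta/4$ and $\det M=-\delta/4$ are right, the Schur-complement computation of the centered constant $F'=-\delta/\Delta$ is the clean way to do that step, and the eigenvalue analysis finishes the classification. One small loose end: your translation to the center presupposes $\Delta\neq0$, so the case $\Delta=0$ in the forward direction is not literally covered by the normal form you wrote down; it would be tidier to rotate first (which requires nothing) and then observe that a vanishing eigenvalue already rules out an ellipse before any centering is attempted.

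As for comparison with the paper: there is nothing to compare. The paper does not prove this lemma at all. It is stated as a known fact, accompanied only by the one-line gloss that ``the second condition ensures that the conic is non--degenerate, while the first condition ensures that the conic is an ellipse.'' Your write-up is a complete proof where the paper offers none, and in fact your identification of $\Delta$ and $\delta$ with the classical invariants $4\det Q$ and $-4\det M$ explains \emph{why} the two conditions split the work in exactly the way the paper asserts.
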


Using Lemma \ref{L1}, we are now able to give a formula for the foci of $%
E_{0}$, given an equation of $E_{0}$, \textbf{without} knowing the length of
the semi--major axis of $E_{0}$, as with Theorem \ref{T1}.

\begin{theorem}
\label{T2}Let $E_{0}$ be an ellipse which is not a circle, and let $r=\sqrt{%
(A-C)^{2}+B^{2}}$; Let $F_{2}=\left( x_{c},y_{c}\right) $ be the rightmost
focus of $E_{0}$ and let $(x_{0},y_{0})$ be the center of $E_{0}$. If the
equation of $E_{0}$ is written in the form $A\left( x-x_{0}\right)
^{2}+B\left( x-x_{0}\right) \left( y-y_{0}\right) +C\left( y-y_{0}\right)
^{2}-a^{2}b^{2}=0$, where $A,C>0$, then 
\begin{eqnarray*}
x_{c} &=&x_{0}+\sqrt{(r+C-A)/2}, \\
y_{c} &=&y_{0}-(\limfunc{sgn}B)\sqrt{(r+A-C)/2}{\large )}\text{ if }B\neq 0%
\text{,}
\end{eqnarray*}%
\begin{eqnarray*}
x_{c} &=&x_{0}+\dfrac{1}{2}{\large (}1-\limfunc{sgn}(A-C){\large )}\sqrt{%
(r+C-A)/2}, \\
y_{c} &=&y_{0}+\dfrac{1}{2}{\large (}1+\limfunc{sgn}(A-C){\large )}\sqrt{%
(r+A-C)/2}{\large \ }\text{if }B=0\text{.}
\end{eqnarray*}
\end{theorem}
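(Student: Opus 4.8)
The plan is to derive Theorem~\ref{T2} by combining Theorem~\ref{T1}(ii) with Lemma~\ref{L1}, eliminating the unknown semi--major axis length $a$ in favor of the coefficients $A,B,C$ alone. The formulas in Theorem~\ref{T1}(ii) give $x_c$ and $y_c$ in terms of $\sqrt{a^2-A}$ and $\sqrt{a^2-C}$, so the entire task reduces to showing that $a^2-A=(r+C-A)/2$ and $a^2-C=(r+A-C)/2$, where $r=\sqrt{(A-C)^2+B^2}$. Once these two algebraic identities are established, substitution into \eqref{foci1} and \eqref{foci2} yields the stated formulas verbatim, since the $\limfunc{sgn}$ structure and the case split on $B$ are carried over unchanged from Theorem~\ref{T1}.

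First I would reduce to the centered equation, writing $E_0$ as $A(x-x_0)^2+B(x-x_0)(y-y_0)+C(y-y_0)^2-a^2b^2=0$, exactly the form in Theorem~\ref{T1}(ii), so that Theorem~\ref{T1} applies directly and $\left\vert x_c\right\vert=\sqrt{a^2-A}$, $\left\vert y_c\right\vert=\sqrt{a^2-C}$. The key computational step is to evaluate $a^2$ from Lemma~\ref{L1}. For the centered equation we have $D=E=0$ and $F=-a^2b^2$, so $\delta=CD^2+AE^2-BDE-F\Delta=a^2b^2\Delta$ and hence $\mu=4\delta/\Delta^2=4a^2b^2/\Delta$. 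Substituting $\mu$ into Lemma~\ref{L1} should recover $a^2$ and $b^2$ consistently, but what I really need is the cleaner observation that Lemma~\ref{L1} gives $a^2=\mu(A+C+r)/2$. The real leverage, though, comes from noting that the coefficients $A,B,C$ of the centered equation are precisely those matched in \eqref{3}, namely $A=a^2-x_c^2$ and $C=a^2-y_c^2$, which immediately give $a^2-A=x_c^2$ and $a^2-C=y_c^2$ without ever invoking Lemma~\ref{L1} for $a^2$ itself.

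Given that, the genuinely new content is to show $x_c^2=(r+C-A)/2$ and $y_c^2=(r+A-C)/2$. I would verify the sum $x_c^2+y_c^2=c^2=(r+C-A)/2+(r+A-C)/2=r$ and the difference $x_c^2-y_c^2=(C-A)$ following from \eqref{3}, since $A-C=(a^2-x_c^2)-(a^2-y_c^2)=y_c^2-x_c^2$. The difference relation is immediate from the coefficient matching; the sum relation $x_c^2+y_c^2=r$ is the one substantive fact, equivalent to $c^2=\sqrt{(A-C)^2+B^2}$, which I would confirm by computing $(A-C)^2+B^2=(y_c^2-x_c^2)^2+4x_c^2y_c^2=(x_c^2+y_c^2)^2=c^4$, using $B=-2x_cy_c$ from \eqref{3}. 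Solving the sum and difference equations then yields the two identities, and substitution completes the proof.

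I expect the main obstacle to be purely bookkeeping: tracking the signs and the $\limfunc{sgn}$ conventions so that the final substituted formulas match Theorem~\ref{T1} exactly in both the $B\neq0$ and $B=0$ cases, and confirming that $r+C-A\ge0$ and $r+A-C\ge0$ so the square roots are real (which holds because $r=\sqrt{(A-C)^2+B^2}\ge\left\vert A-C\right\vert$). No delicate analysis is required beyond the single algebraic identity $(A-C)^2+B^2=c^4$; everything else is direct substitution inherited from Theorem~\ref{T1}.
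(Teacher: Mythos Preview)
Your argument is correct, but it takes a genuinely different route from the paper's. The paper carries out the plan you sketch and then abandon: it invokes Lemma~\ref{L1} with $D=E=0$ and $F=-a^2b^2$ to get $\delta=a^2b^2\Delta$, computes $a^2b^2=4\delta^2/\Delta^3$ from the product of the two formulas in \eqref{absq}, and combines these to force $\mu=1$, so that Lemma~\ref{L1} gives $a^2=(A+C+r)/2$ explicitly; the identities $a^2-A=(r+C-A)/2$ and $a^2-C=(r+A-C)/2$ then follow by subtraction and are fed into Theorem~\ref{T1}(ii). You instead bypass Lemma~\ref{L1} entirely and work straight from the coefficient matching \eqref{3}: from $A-C=y_c^2-x_c^2$ and $B=-2x_cy_c$ you obtain $(A-C)^2+B^2=(x_c^2+y_c^2)^2$, hence $r=x_c^2+y_c^2$, and together with $x_c^2-y_c^2=C-A$ this solves for $x_c^2=a^2-A$ and $y_c^2=a^2-C$ directly. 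Your approach is more elementary and shows that Lemma~\ref{L1} is not actually needed for Theorem~\ref{T2}; the paper's route, by contrast, produces the value of $a^2$ itself as a byproduct and highlights the role of Lemma~\ref{L1}, which is one of the paper's stated aims.
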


\begin{remark}
\label{R1}To use Theorem \ref{T2}, one must first rewrite the equation of $%
E_{0}$ so that it has the form given in Theorem \ref{T2}. First one writes
the equation of $E_{0}$ in the form $A\left( x-x_{0}\right) ^{2}+B\left(
x-x_{0}\right) \left( y-y_{0}\right) +C\left( y-y_{0}\right) ^{2}+F=0$ using
the formula $x_{0}=\dfrac{BE-2CD}{\Delta }$, $y_{0}=\dfrac{BD-2AE}{\Delta }$%
; One can then obtain $a^{2}b^{2}$ without needing to know $a^{2}$ or $b^{2}$
since it follows easily by Lemma \ref{L1} that $a^{2}b^{2}=\dfrac{\Delta }{%
\delta }=\dfrac{4AC-B^{2}}{CD^{2}+AE^{2}-BDE-F(4AC-B^{2})}$, where $%
Ax^{2}+Bxy+Cy^{2}+Dx+Ey+F=0$ is any given equation of $E_{0}$. Multiplying
thru by $\dfrac{-a^{2}b^{2}}{F}$ then yields the proper form.
\end{remark}

\begin{proof}
As in the proof of Theorem \ref{T1}, we may assume, without loss of
generality, that $x_{0}=y_{0}=0$, so that the equation of $E_{0}$ has the
form $Ax^{2}+Bxy+Cy^{2}-a^{2}b^{2}=0$. By Lemma \ref{L2}, $\Delta \neq 0$,
and by Lemma \ref{L1}, with $D=E=0$ and $F=-a^{2}b^{2}$, we have $\delta
=a^{2}b^{2}\Delta $, which implies that $\mu =\dfrac{4a^{2}b^{2}\Delta }{%
\Delta ^{2}}=\dfrac{4a^{2}b^{2}}{\Delta }$; Also by Lemma \ref{L1}, $%
a^{2}b^{2}=\dfrac{\mu ^{2}}{4}{\large (}(A+C)^{2}-(A-C)^{2}-B^{2}{\large )}%
=\left( \dfrac{4\delta ^{2}}{\Delta ^{4}}\right) \Delta =\dfrac{4\delta ^{2}%
}{\Delta ^{3}}$; Thus $\dfrac{\delta }{\Delta }=a^{2}b^{2}=\dfrac{4\delta
^{2}}{\Delta ^{3}}$, and so $\delta =\dfrac{1}{4}\Delta ^{2}$; Hence $\mu =%
\dfrac{4\delta }{\Delta ^{2}}=1$, which implies, by Lemma \ref{L1}, that $%
a^{2}=\dfrac{A+C+r}{2}$. Substituting $a^{2}-A=\dfrac{r+C-A}{2}$ and $%
a^{2}-C=\dfrac{r+A-C}{2}$ into Theorem \ref{T1} yields Theorem \ref{T2}.
\end{proof}

\section{Rotation Angle}

Below we give a formula for the rotation angle, $\theta $, of a
non--circular ellipse, $E_{0}$, as a function of the coefficients of an
equation of $E_{0}$. We also give a simple formula for $\tan \theta $. Here
we are assuming that $0\leq \cot ^{-1}x<\pi $.

\begin{theorem}
\label{T3}Let $E_{0}$ be an ellipse with equation $%
Ax^{2}+Bxy+Cy^{2}+Dx+Ey+F=0$, with $A,C>0$; Let $\theta $\ denote the
counterclockwise angle of rotation to the major axis of $E_{0}$ from the
line thru the center of $E_{0}$ and parallel to the $x$\ axis, with $0\leq
\theta <\pi $;Let $r=\sqrt{(A-C)^{2}+B^{2}}$.

(i) $\theta =\left\{ 
\begin{array}{ll}
(1+\limfunc{sgn}B)\dfrac{\pi }{4}+\dfrac{1}{2}\cot ^{-1}\left( \dfrac{A-C}{B}%
\right) & \text{if }B\neq 0\  \\ 
{\large (}1+\limfunc{sgn}(A-C){\large )}\dfrac{\pi }{4} & \text{if }B=0%
\end{array}%
\right. $ and

(ii) $\tan \theta =\dfrac{B}{A-C-r}$ if $B\neq 0$ or $B=0$ and $A<C$.
\end{theorem}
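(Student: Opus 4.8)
The plan is to prove Theorem~\ref{T3} by recovering $\theta$ directly from the focus data already computed in Theorem~\ref{T1} and Theorem~\ref{T2}, rather than working with $\cot(2\theta)$ from scratch. As in the earlier proofs, I would reduce to the centered case $x_0=y_0=0$, so that the focus formulas read $x_c=\sqrt{a^2-A}$ and $y_c=-(\limfunc{sgn}B)\sqrt{a^2-C}$ when $B\neq 0$. The key geometric fact from~(\ref{4}) is that $(x_c,y_c)=(c\cos\theta,c\sin\theta)$ for $0\le\theta\le\pi/2$ and $(x_c,y_c)=(-c\cos\theta,-c\sin\theta)$ for $\pi/2<\theta<\pi$, where $c=\sqrt{a^2-b^2}$. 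In either regime $\tan\theta=y_c/x_c$, since the two sign flips cancel. This reduces part~(ii) to a computation of $y_c/x_c$ in terms of the coefficients.

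For part~(ii), I would substitute the Theorem~\ref{T2} expressions $x_c=\sqrt{(r+C-A)/2}$ and $y_c=-(\limfunc{sgn}B)\sqrt{(r+A-C)/2}$ into $\tan\theta=y_c/x_c$, obtaining
\begin{equation*}
\tan\theta=-(\limfunc{sgn}B)\sqrt{\frac{r+A-C}{r+C-A}}.
\end{equation*}
The main computational step is then to show this equals $\dfrac{B}{A-C-r}$. I would rationalize: since $r^2=(A-C)^2+B^2$, we have $(r+A-C)(r-A+C)=r^2-(A-C)^2=B^2$, so $\sqrt{r+A-C}\,\sqrt{r+C-A}=|B|$ and $(r+A-C)/|B|=|B|/(r+C-A)$ up to the sign bookkeeping. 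Tracking $\limfunc{sgn}B$ carefully, the ratio collapses to $B/(A-C-r)$; here the stated hypothesis ``$B\neq 0$ or ($B=0$ and $A<C$)'' is exactly what keeps the denominator $A-C-r$ nonzero and the formula meaningful (when $B=0,A<C$ one has $r=C-A$, giving $\tan\theta=0$, consistent with $\theta=0$).

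For part~(i), I would use the half-angle structure. Starting from the universal identity $\cot(2\theta)=(A-C)/B$ (valid when $B\neq 0$), the inverse cotangent gives $2\theta$ only up to the ambiguity of which branch, so the correction term $(1+\limfunc{sgn}B)\pi/4$ is needed to place $\theta$ in the correct subinterval of $[0,\pi)$. The clean way to verify this is to confirm it agrees with the $\tan\theta$ formula from part~(ii): I would check the two cases $B>0$ and $B<0$ separately against the known quadrant of $F_2$ from~(\ref{4}) (quadrant~1 forces $0<\theta<\pi/2$, quadrant~4 forces $\pi/2<\theta<\pi$), using the range convention $0\le\cot^{-1}x<\pi$ stated just before the theorem. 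The $B=0$ case is immediate from Case~2 of Theorem~\ref{T1}, where $A<C$ gives $\theta=0$ and $A>C$ gives $\theta=\pi/2$, matching $(1+\limfunc{sgn}(A-C))\pi/4$.

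The main obstacle I anticipate is the sign and branch bookkeeping in part~(i): getting $\cot^{-1}$ to return the value in $[0,\pi)$ that lands $\theta$ in the geometrically correct subinterval, and confirming that the additive $(1+\limfunc{sgn}B)\pi/4$ shift does exactly this in both sign cases. The algebra in part~(ii) is routine once the radical identity $(r+A-C)(r+C-A)=B^2$ is in hand; the genuine care is entirely in matching signs to the quadrant data of the focus.
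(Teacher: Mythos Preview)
Your proposal is correct and mirrors the paper's proof almost exactly: both reduce to the centered case, prove (ii) via $\tan\theta=y_c/x_c$ with the Theorem~\ref{T2} expressions and the identity $(r+A-C)(r+C-A)=B^2$, and resolve the branch ambiguity in (i) by tying the sign of $B$ to the range of $\theta$. The only cosmetic difference is that the paper derives $B=(b^2-a^2)\sin(2\theta)$ and $A-C=(b^2-a^2)\cos(2\theta)$ explicitly from (\ref{3}) and (\ref{4}) to read off $\limfunc{sgn}(\sin 2\theta)=-\limfunc{sgn}B$ directly, whereas you route the same information through the quadrant of $F_2$ established in Theorem~\ref{T1}; these are equivalent.
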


\begin{remark}
Note that if $A=C$ and $B=0$, we have a circle and hence no rotation angle.
\end{remark}

\begin{proof}
Again, we assume that $x_{0}=y_{0}=0$, where $E_{0}$ has center $%
=(x_{0},y_{0})$; By (\ref{3}) and (\ref{4}), $A-C=y_{c}^{2}-x_{c}^{2}=c^{2}%
\sin ^{2}\theta -c^{2}\cos ^{2}\theta =(a^{2}-b^{2})(\sin ^{2}\theta -\cos
^{2}\theta )=(b^{2}-a^{2})\cos (2\theta )$, and $B=-2x_{c}y_{c}=-2c^{2}\cos
\theta \sin \theta =(b^{2}-a^{2})\sin (2\theta )$ for any $0\leq \theta \leq
\pi $; We use the well--known formula $\cot (2\theta )=\dfrac{A-C}{B}$,
which implies that $\theta =\dfrac{1}{2}\cot ^{-1}\left( \dfrac{A-C}{B}%
\right) $ or $\theta =\dfrac{\pi }{2}+\dfrac{1}{2}\cot ^{-1}\left( \dfrac{A-C%
}{B}\right) $, depending upon whether $\theta $ lies in quadrant 1 or
quadrant 2. Note that $b^{2}-a^{2}<0$;

\textbf{Case 1: }$B\neq 0$

If $B<0$, then $(b^{2}-a^{2})\sin (2\theta )<0$, which implies that $\sin
(2\theta )>0$ and so $0<\theta <\dfrac{\pi }{2}$; Thus $\theta =\dfrac{1}{2}%
\cot ^{-1}\left( \dfrac{A-C}{B}\right) $; If $B>0$, then $(b^{2}-a^{2})\sin
(2\theta )>0$, which implies that $\sin (2\theta )<0$ and so $\dfrac{\pi }{2}%
<\theta <\pi $; Thus $\theta =\dfrac{\pi }{2}+\dfrac{1}{2}\cot ^{-1}\left( 
\dfrac{A-C}{B}\right) $.

\textbf{Case 2: }$B=0$\textbf{\ }

Then $\sin (2\theta )=0$, which implies that $\theta =0$ or $\theta =\dfrac{%
\pi }{2}$; If $A>C$, then $A-C=(b^{2}-a^{2})\cos (2\theta )>0$, which
implies that $\cos (2\theta )<0$ and so $\theta =\dfrac{\pi }{2}$; If $A<C$,
then $A-C=(b^{2}-a^{2})\cos (2\theta )<0$, which implies that $\cos (2\theta
)>0$ and so $\theta =0$. That proves (i).

While one could use the fact that $\cot (2\theta )=\dfrac{A-C}{B}$, we find
it easier to proceed as follows to prove (ii). Now by (\ref{4}), $\tan
\theta =\dfrac{y_{c}}{x_{c}}$

\textbf{Case 1:} $B\neq 0$

Then by Theorem \ref{T2}, $x_{c}=\sqrt{(r+C-A)/2}$ and $y_{c}=-(\limfunc{sgn}%
B)\sqrt{(r+A-C)/2}{\large )}$, which implies that

$\tan \theta =\dfrac{-(\limfunc{sgn}B)\sqrt{(r+A-C)/2}{\large )}}{\sqrt{%
(r+C-A)/2}}=-(\limfunc{sgn}B)\sqrt{\dfrac{r+A-C}{r+C-A}}$; Now $\dfrac{r+A-C%
}{r+C-A}$ simplifies to $\dfrac{B^{2}}{{\large (}r-(A-C){\large )}^{2}}$,
and $r-(A-C)=\sqrt{(A-C)^{2}+B^{2}}-(A-C)>0$ using the inequality $\sqrt{%
x^{2}+y^{2}}-x>0$ for any $x$; Thus $\tan \theta =-(\limfunc{sgn}B)\sqrt{%
\dfrac{B^{2}}{{\large (}r-(A-C){\large )}^{2}}}=\dfrac{-(\limfunc{sgn}%
B)\left\vert B\right\vert }{r-(A-C)}=\dfrac{-B}{r-(A-C)}=\dfrac{B}{A-C-r}$;

\textbf{Case 2:} $B=0$

If $A<C$, then by Theorem \ref{T2}, $x_{c}=\sqrt{(r+C-A)/2}$ and $y_{c}=%
\sqrt{(r+A-C)/2}{\large )}$, and the rest follows as above.
\end{proof}

\section{Example}

Consider the ellipse with equation $\allowbreak 4x^{2}+2xy+6y^{2}-6x+10y=1$;
Using $A=4$, $B=2$, $C=6$, $D=-6$, and $E=10$, one has $\Delta
=4(4)(6)-2^{2}=\allowbreak 92$ and $\delta =\allowbreak 828$; Using Remark %
\ref{R1} yields $a^{2}b^{2}=\dfrac{4\delta ^{2}}{\Delta ^{3}}=\allowbreak 
\dfrac{81}{23}$ and $x_{0}=\allowbreak 1$, $y_{0}=\allowbreak -1$; Rewriting
the equation gives $4(x-1)^{2}+2(x-1)(y+1)+6(y+1)^{2}-9=0$; Multiplying thru
by $\dfrac{-a^{2}b^{2}}{-9}=\dfrac{9}{23}$ yields $\dfrac{36}{23}(x-1)^{2}+%
\dfrac{18}{23}(x-1)(y+1)+\dfrac{54}{23}(y+1)^{2}-\dfrac{81}{23}=0$; Now we
have $A=\dfrac{36}{23}$, $B=\dfrac{18}{23}$, $C=\dfrac{54}{23}$, and $%
r=\allowbreak \dfrac{18\sqrt{2}}{23}$; By Theorem \ref{T2}, $%
F_{2}=\allowbreak \left( 1+\dfrac{3}{23}\sqrt{23+23\sqrt{2}},-1-\dfrac{3}{23}%
\sqrt{-23+23\sqrt{2}}\right) $, which implies that $F_{1}=\left( 1-\dfrac{3}{%
23}\sqrt{23+23\sqrt{2}},-1+\dfrac{3}{23}\sqrt{-23+23\sqrt{2}}\right)
\allowbreak $; By Theorem \ref{T3}, $\theta =(1+\limfunc{sgn}B)\dfrac{\pi }{4%
}+\dfrac{1}{2}\cot ^{-1}\left( \dfrac{A-C}{B}\right) =\allowbreak \dfrac{%
7\pi }{8}$; One can verify that $\tan \theta =\dfrac{B}{A-C-r}=\allowbreak 1-%
\sqrt{2}\allowbreak $; \newline

\end{document}